\newtheorem{theorem}{Theorem}[section]
\newtheorem{corollary}[theorem]{Corollary}
\newtheorem{proposition}[theorem]{Proposition}
\newtheorem{mainthm}[theorem]{Main Theorem}
\theoremstyle{definition}
\newtheorem{definition}[theorem]{Definition}
\newtheorem{remark}[theorem]{Remark}
\numberwithin{equation}{section}
\DeclareMathOperator{\Lin}{Span}
\begin{document}

\baselineskip=17pt

\title{Most Cantor sets in $\mathbb R^N$ 
are in general position with respect to all projections}

\author{Olga Frolkina\\
E-mail: ofrolkina@gmail.com
}

\date{}

\maketitle


\renewcommand{\thefootnote}{}

\footnote{2020 \emph{Mathematics Subject Classification}: Primary 54B20; Secondary 54E52, 54C25, 46C05.}

\footnote{\emph{Key words and phrases}: 
Euclidean space, 
Hilbert space, 
projection,
Cantor set,
dimension,
general position,
Baire Category Theorem.}

\renewcommand{\thefootnote}{\arabic{footnote}}
\setcounter{footnote}{0}

\begin{abstract}
We prove the theorem stated in the title.
This answers a question of John Cobb (1994).
We also consider the case of the Hilbert space~$\ell _2$.
\end{abstract}

\section{Introduction}

Already at the end of the 19th century, mathematicians
were interested in behaviour of projections
of totally disconnected subsets of Euclidean space.
In 1884,  G.~Cantor
described a 
continuous surjection of
what is now called
the standard middle-thirds Cantor set
$\mathcal C$ onto the segment $I=[0,1]$;
it takes a point
$\frac{x_1}{3} + \frac{x_2}{3^2} + \ldots 
\in\mathcal C$ 
(where each
$x_i\in \{0;2\}$)
into the point
$\frac12\left( \frac{x_1}{2}+\frac{x_2}{2^2} + \ldots \right) \in I$.
In 1906, L.~Zoretti
recalls as a known fact
(``ce resultat bien connu etant acquis...'')
that the graph 
of any such surjection
is a Cantor set in plane 
whose projection to $y$-axis coincides with $I$. 
(See \cite{Frolkina2020a} or
\cite{Frolkina2020b} for historic details.)

In 1994, John Cobb asked \cite[p.~126]{Cobb}:
``given integers $m>n>k>0$, is there a Cantor set in $\mathbb R^m$ each of whose projections
into $n$-hyperplanes will be exactly $k$-dimensional?''
Examples of such sets are known for the following triples $(m,n,k)$:
$(2,1,1)$ \cite[{\bf 9}, p.272; fig.2, p.273]{Antoine},
\cite[p.~124, Example]{Cobb},
\cite[Prop.~1]{Dijkstra-van-Mill};
$(m, n, n )$ \cite{Borsuk};
$(3,2,1)$ \cite{Cobb};
$(m,n,n-1)$ \cite{Frolkina2010};
$(m,m-1,k)$ \cite{BDM}.
Other constructions for the cases
$(m,{m-1},{m-2})$ and $(3,2,1)$ are described in 
the papers
\cite{Frolkina2020a} and  \cite{Frolkina2021} which also contain further references.
For each integer $n$, there is a Cantor set in the Hilbert space~$\ell _2$
all of whose projections into $n$-planes are $(n-1)$-dimensional
\cite{BDM}.

In the same paper, Cobb posed another question
\cite[p.~128]{Cobb}:
``Cantor sets that raise dimension under all projections
and those in general position with respect to all projections are 
both dense in the Cantor sets in $\mathbb R^m$ ---
which (if either) is more common, in the sense of
category or dimension or anything?''
In \cite{Frolkina2020b}, I answered a weaker version of this question
showing that
all projections of a typical Cantor set in Euclidean space
are Cantor sets;
thus the examples listed above are ``a rarity''.
In this article, we fully answer the category part of this question:
a typical Cantor set in $\mathbb R^N$ is in general position
with respect to all projections.
We also show that 
a typical Cantor set in $\ell _2$ is in general position
with respect to all projections with finite-dimensional kernels.
(By a projection of  $\mathbb R^N$ or $\ell _2$,
we always mean its orthogonal projection onto a
non-zero linear subspace.)

\section{Preliminaries}

\subsection{Baire Category Theorem}

Let $\mathcal X$ be a non-empty topological space.
We say that a set $\mathcal A\subset\mathcal  X$ is 
\emph{meager in $\mathcal X$}, or
\emph{of first category in $\mathcal X$}, 
if it is the union of countably many
nowhere dense subsets of $\mathcal X$.
A set $\mathcal A$ that is not of first category in $\mathcal X$ is said to be \emph{non-meager in $\mathcal X$}, or
\emph{of second category in $\mathcal X$}.
A set $\mathcal A\subset \mathcal X$ is called 
\emph{comeager in $\mathcal X$},
or 
\emph{residual in $\mathcal X$},
if it is the complement of a meager set;
equivalently, if
it contains the intersection
of a countable family of dense open sets.
The two last definitions make sense only
if $\mathcal X$ is of second category (in itself).

A  topological space $\mathcal X$ is called a \emph{Baire space}
if
any non-empty open set in $\mathcal X$ is non-meager 
in $\mathcal X$;
equivalently, every comeager set is dense in $\mathcal X$;
equivalently, 
for each countable family $\{\mathcal  U_n , n\in\mathbb N \}$ 
of open dense subsets of $\mathcal X$, the intersection
$\bigcap\limits_{n=1}^\infty\mathcal  U_n$
is dense in $\mathcal X$.
Every Baire space is non-meager in itself.
The \emph{Baire Category Theorem} states that
\emph{a completely metrizable space is Baire};
see e.g. \cite[8.4]{Kechris} for a proof.
A $G_\delta $ subset of a completely metrizable
space is itself completely metrizable \cite[3.11]{Kechris}.

Let $\mathcal A$ be a subset of
a non-empty Baire space $\mathcal X$.
\emph{A generic (or typical) element of $\mathcal X$ is in $\mathcal A$}
or 
\emph{most elements of $\mathcal X$ are in $\mathcal A$}
if $\mathcal A$ is comeager in~$\mathcal X$.
This notion is well-defined: in a Baire space,
the statements
``most elements are in $\mathcal A$''
and ``most elements are in $\mathcal X-\mathcal A$''
exclude each other.
In a Baire space $\mathcal X$,
a subset $\mathcal A$ is comeager iff it contains a
dense $G_\delta $ subset of $\mathcal X$.

\subsection{Hyperspaces of compact (Cantor) subsets}

\emph{A Cantor set} is 
a space homeomorphic to 
the standard middle-thirds Cantor set 
$\mathcal C$.
By the Brouwer theorem, these spaces can be characterized as non-empty metric zero-dimensional perfect compacta
\cite[7.4]{Kechris}.

Let $X$ be a complete metric space.
The space 
$\mathcal K (X)$
of all non-empty
compact subsets
of $X$
is endowed with the 
Hausdorff metric;
this space is complete \cite[4.25]{Kechris}.
The corresponding topology on $\mathcal K ( X)$
coincides with the Vietoris topology
\cite[4.F, 4.21]{Kechris}.
If, moreover, $X$ has no isolated points, then
the set $\mathcal C(X)$
of all Cantor sets in $X$
is a dense $G_\delta $ subset of  $\mathcal K (X)$
\cite[Prop. 2]{Kuratowski}
(see also 
\cite[Lemma 1]{Kuzminykh-typ} for the case of $\mathbb R^N$, 
or \cite[Lemma 2.1, Remark 2.2]{Gartside}).
Hence $\mathcal C(X)$ is itself a Baire space.
Note that the Hausdorff metric
on $\mathcal C(X)$ may be not complete.

\section{Main result}

\subsection{Statements}

The next definition 
is introduced in \cite[p. 127]{Cobb} for $\mathbb R^N$.
We extend it to the case of $\ell _2$.

\begin{definition}\label{def:GP}
Let $H$ be a non-zero proper 
linear subspace of $\mathbb R^N$ (or of $\ell _2$).
Let $p_{H}  : \mathbb R^N \to H^\bot $
(or $p_{H}  : \ell _2 \to H^\bot $)
be the orthogonal projection onto $H^\bot $.
For a non-empty subset $M$ of $\mathbb R^N$ (or of $\ell _2$),
denote the fibers of $p_{H} |_M$ by
$F(A) = \left( p_{H} |_M \right) ^{-1} \left( p_{H}  (A) \right)$,
where $A\in M$. Call a fiber $F(A)$ \emph{non-degenerate} if $|F(A)|>1$.
A set $M\subset \mathbb R^N$ (or $M\subset \ell _2$)
is said to be \emph{in general position with respect to
$p_{H} $ or $H$} if, denoting $k=\dim H$, we have:
$p_{H} |_M$ has only finitely many 
non-degenerate fibers;
each non-degenerate fiber $F(A)$ consists of $\leqslant k+1$ points  which form
the vertices of a simplex of dimension
$|F(A)|-1$;
and
$\sum\limits_{A\in M} 
\left( \frac{\left| F( A) \right| - 1 }{|F(A)|}\right) \leqslant k$.
We say that $M$ is \emph{in general position with respect to all projections} if, for \emph{each}~$H$, $M$ is in general position with respect to $H$.
Finally, $M\subset \ell _2$ is \emph{in general position with respect to all projections with finite-dimensional kernels} if $M$ is in general position with respect to $H$ for \emph{each} 
finite-dimensional~$H$.
\end{definition}

\begin{remark}
In \cite[p. 127]{Cobb}, 
the definition of being \emph{in general position with respect to
$p_{H} $ or $H$} looks different.
Namely, the last condition is 
written as\linebreak $\sum\limits_{A\in M} 
( | F( A) | - 1 ) \leqslant k$
instead of 
$\sum\limits_{A\in M} 
\left( \frac{\left| F( A) \right| - 1 }{|F(A)|}\right) \leqslant k$.
But Cobb writes:
``...there are Cantor
sets in $\mathbb R^m$ all of whose projections have 
$0$-dimensional images --- a Cantor set
in a segment is an example; most of its projections are embeddings, while
the others have ``large'' fibers. Of course there cannot be a Cantor set $C$
with every projection an embedding --- for take three points of $C$ and project
parallel to a $2$-plane containing them; or take two pairs of points and project
parallel to a $2$-plane parallel to each pair. We will show that there are Cantor
sets which have at worst the later sort of singularities under projections'' [4, pp. 126-127].
In other words,
the total degeneracy over all distinct fibers should not exceed $k$.
This is exactly what we wrote in Definition~\ref{def:GP}.
(To use Cobb's formula, one should take 
the sum over \emph{distinct} fibers.)
The disagreement between the formula and explanations in \cite{Cobb} was observed by Sergey Melikhov during a talk of mine; I am indebted to him for this remark.
\end{remark}

In $\mathbb R^N$, $N\geqslant 2$,
there exist Cantor sets in general position
with respect to all projections
\cite[Theorem 5]{Cobb} (where a sketch proof is given).
Our main result says
that in the sense of category,
a typical Cantor set in $\mathbb R^N$ 
is in general position with respect to all projections;
this answers the category part of 
Cobb's question \cite[p. 128]{Cobb}.

\begin{mainthm}\label{mainthm}
For each integer $N\geqslant 2$,
Cantor sets in $\mathbb R^N$ 
that are in general position with respect to all projections form
a dense $G_\delta $ subset in the space $\mathcal C(\mathbb R^N)$.
Cantor sets in $\ell _2$ 
that are in general position with respect to all projections
with finite-dimensional kernels form
a dense $G_\delta $ subset in $\mathcal C(\ell _2)$.
\end{mainthm}

Recall that $\mathcal C(X)$ is a
dense $G_\delta $ subset 
in $\mathcal K (X) $ if $X$
is a complete metric space without isolated points.
Hence our main result immediately follows from

\begin{theorem}\label{mainthm-compact}
For each integer $N\geqslant 2$,
non-empty compact sets in $\mathbb R^N$ 
that are in general position with respect to all projections form
a dense $G_\delta $ subset in the space $\mathcal K (\mathbb R^N)$.
Non-empty compacta in $\ell _2$ 
that are in general position with respect to all projections
with finite-dimensional kernels form
a dense $G_\delta $ subset in $\mathcal K(\ell _2)$.
\end{theorem}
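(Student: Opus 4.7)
My plan is to realise the family $\mathcal G(X)$ of compacta in general position with respect to all (finite-dimensional-kernel) projections as a countable intersection of open dense subsets of the Baire space $\mathcal K(X)$, where $X\in\{\mathbb R^N,\ell_2\}$. Since $\mathcal K(X)$ is Hausdorff-complete, this yields the dense $G_\delta$ conclusion.

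\textbf{Finite-witness reformulation.} The crucial first step, and the one I expect to be the main obstacle, is to encode Cobb's definition by rank inequalities on finite tuples, eliminating any reference to the ambient Grassmannian. For $n$ distinct points $x_1,\ldots,x_n$ in $X$ and a partition $\pi$ of $\{1,\ldots,n\}$, put
\[
V_\pi = \Lin\{x_i-x_j : i\sim_\pi j\}.
\]
Call the tuple \emph{$\pi$-deficient} if $\dim V_\pi < n-|\pi|$ (and, when $X=\mathbb R^N$, additionally $\dim V_\pi \leqslant N-1$, so that $V_\pi$ itself is a non-zero proper subspace). I would prove that $K\in\mathcal K(X)$ fails to be in $\mathcal G(X)$ iff $K$ contains some $\pi$-deficient tuple. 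Indeed, for such a tuple take $H=V_\pi$: the $\pi$-blocks collapse into fibers of $p_H$, producing a Cobb-sum of at least $n-|\pi| > \dim H$ and thereby violating Definition~\ref{def:GP}. Conversely, each way Cobb's definition can fail (oversized fiber, affinely dependent fiber, or excessive total collapse) produces a $\pi$-deficient finite subtuple of $K$ by a simple choice of witnesses. This reformulation is what makes the Hilbert case tractable, since the non-compactness of $G(k,\ell_2)$ no longer appears.

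\textbf{$G_\delta$-ness.} For positive integers $n,m$ set
\[
\mathcal D_{n,m} = \bigl\{K\in\mathcal K(X): K \text{ contains a } \pi\text{-deficient tuple } x_1,\ldots,x_n \text{ with } \|x_i-x_j\|\geqslant 1/m\bigr\}.
\]
There are only finitely many partitions of $\{1,\ldots,n\}$, and for each $\pi$ the condition $\dim V_\pi\leqslant r$ is closed on $X^n$ by upper semi-continuity of rank. A standard Hausdorff-limit argument — given $K_\ell\to K$, extract convergent subsequences of witnesses $x_i^\ell\in K_\ell$ using compactness of $K\cup\bigcup_\ell K_\ell$, and observe that pairwise separation $\geqslant 1/m$ and the closed rank condition pass to the limit — shows $\mathcal D_{n,m}$ is closed in $\mathcal K(X)$. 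By the equivalence from Step 1, $\mathcal G(X) = \bigcap_{n,m}\bigl(\mathcal K(X)\setminus\mathcal D_{n,m}\bigr)$ is a $G_\delta$.

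\textbf{Density.} Given $K\in\mathcal K(X)$ and $\varepsilon>0$, pick a finite $\varepsilon$-net $\{y_1,\ldots,y_n\}\subseteq K$ and perturb to $\{x_1,\ldots,x_n\}$ with $\|x_i-y_i\|<\varepsilon$. For fixed $n$, the set of $\pi$-deficient tuples in $X^n$ is the finite union over $\pi$ of proper real-algebraic subvarieties (the loci where the rank drops below $n-|\pi|$), hence closed and nowhere dense; a generic perturbation avoids all of them simultaneously, as both $\mathbb R^N$ (for $n\geqslant N$) and the infinite-dimensional $\ell_2$ supply ample room for the adjustment. The resulting finite set $F'=\{x_1,\ldots,x_n\}$ lies in $\mathcal G(X)$ and is within $2\varepsilon$ of $K$ in Hausdorff distance. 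Thus $\mathcal G(X)$ is dense, completing the proof of Theorem~\ref{mainthm-compact}.
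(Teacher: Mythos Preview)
Your approach is correct and essentially the same as the paper's: both encode failure of Cobb's general-position condition by closed rank-drop relations on finite tuples of distinct points (your $\pi$-deficiency is exactly the paper's relation $R_{k;s;i_1,\ldots,i_s}$ for an admissible array, with $k=\dim V_\pi$ and the block sizes of $\pi$ playing the role of $i_1,\ldots,i_s$), and then show that compacta avoiding all such relations form a dense $G_\delta$. The only difference is packaging: the paper obtains density and $G_\delta$-ness in one stroke by invoking the Mycielski--Kuratowski theorem, whereas you reprove that instance by hand via the separation parameter $1/m$ for closedness and a separate finite-perturbation argument for density.
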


Concerning the case of $\ell _2$,
it is appropriate to remind the reader of Theorem~3 from \cite{BDM}
which says that for any integer $k$ and any compact subset
$M\subset \ell _2$,
the restriction
$p_{H} |_M$ is an embedding
for all $H$ in some dense
subset of the Grassmann space
$\mathcal G_k(\ell _2)$.

As a consequence, we get the following result.
Its first part 
was proved in \cite{Frolkina2020b} 
by a straightforward geometric reasoning.

\begin{corollary}\label{cor1}
For each integer $N\geqslant 2$,
all projections of a typical 
Cantor set in $\mathbb R^N$ 
are Cantor sets.
For a 
typical 
Cantor set in $\ell _2$,
any of its projections
with finite-dimensional kernel
is a Cantor set.
\end{corollary}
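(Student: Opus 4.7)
The plan is to deduce Corollary~\ref{cor1} directly from the Main Theorem. By that theorem, a typical Cantor set $M$ in $\mathbb R^N$ (respectively in $\ell_2$) is in general position with respect to every projection (respectively to every projection with finite-dimensional kernel). It therefore suffices to prove the following deterministic statement: \emph{if a Cantor set $M$ is in general position with respect to a projection $p_H$ with $\dim H<\infty$, then $p_H(M)$ is itself a Cantor set.} By Brouwer's characterization recalled in the Preliminaries, this amounts to verifying that $p_H(M)$ is a non-empty, compact, metrizable, perfect, zero-dimensional space.

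Non-emptiness, compactness, and metrizability are immediate, since $p_H(M)$ is the continuous image of a non-empty compactum in a metric space. For perfectness, let $E\subset M$ denote the union of all non-degenerate fibers of $p_H|_M$; by the general-position hypothesis $E$ is finite. If some $y\in p_H(M)$ were isolated, then its preimage $(p_H|_M)^{-1}(y)$ would be a finite non-empty open subset of the perfect space $M$, which is absurd.

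The step requiring the most care is zero-dimensionality. Since $E$ is finite and $M$ is zero-dimensional, for each $x\in E$ one can choose an arbitrarily small clopen neighborhood $V_x\subset M$ of $x$ with $V_x\cap E=\{x\}$. Given $y\in p_H(M)$, list $(p_H|_M)^{-1}(y)=\{x_1,\ldots,x_j\}$ and set $V=V_{x_1}\cup\cdots\cup V_{x_j}$ if $y\in p_H(E)$; otherwise take $V$ to be an arbitrarily small clopen neighborhood of the unique preimage of $y$ that is disjoint from $E$. The key verification is that $p_H(V)\cap p_H(M\setminus V)=\emptyset$: any common point would have two distinct preimages in $M$, hence both in $E$, and the construction of $V$ would force them both to lie in $V$, a contradiction. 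Consequently $p_H(V)=p_H(M)\setminus p_H(M\setminus V)$ is clopen in $p_H(M)$, and these clopen neighborhoods of $y$ can be made arbitrarily small, yielding zero-dimensionality.

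I expect the main obstacle to be precisely this last verification: the geometric picture that $p_H|_M$ is injective off the finite singular set $E$ and therefore cannot create spurious identifications between $V$ and its complement has to be turned into a careful choice of $V$ whose image is simultaneously open and closed in $p_H(M)$. Once this is in place, Brouwer's characterization finishes the argument, and the corollary follows for both the Euclidean and the Hilbert-space cases without modification.
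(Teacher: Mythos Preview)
Your proof is correct and follows exactly the route the paper indicates: deduce the corollary from the Main Theorem together with Brouwer's characterization of Cantor sets. The paper's own proof is a single sentence invoking these two ingredients, so you have simply written out the verification (perfectness and zero-dimensionality of $p_H(M)$) that the paper leaves implicit.
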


This corollary follows from Theorem \ref{mainthm} together with Brouwer's characterization of Cantor sets
\cite[7.4]{Kechris}.

\subsection{Partial confirmation}

Before we proceed to the proof of Theorem~\ref{mainthm-compact}, 
let us recall the following standard

\begin{definition}
A subset $M$
of $\mathbb R^N$ (or of $\ell _2$)
is said to be \emph{in general position}
if each of its subsets
of $d+1$ distinct
points $A _0,A_1,\ldots , A _d $, 
where $d\leqslant N$ (respectively, where $d$ is any integer),
is affinely independent;
that is, the affine subspace generated by
$\{ A _0,A _1,\ldots , A _d  \}$ has dimension $d$.
\end{definition}

A Cantor set which satisfies Definition~\ref{def:GP}
is clearly in general position. 
After minor modification,
direct geometric reasoning from
\cite[proof of Lemma~2]{Kuzminykh-typ} give us

\begin{proposition}\label{GP-1}
For any $N\geqslant 2$,
the set of all non-empty compacta
in $\mathbb R^N$ which are
in general position
form a dense $G_{\delta }$ subset in $\mathcal K(\mathbb R^N)$.
The set of all 
non-empty compacta in
$ \ell _2$ which are
in general position
form a dense $G_{\delta }$ subset in $\mathcal K(\ell _2)$.
\end{proposition}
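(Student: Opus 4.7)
The plan is to exhibit the set of compacta in general position as a countable intersection of open subsets of $\mathcal K$ (the $G_{\delta }$ part), and to prove density by approximating an arbitrary compactum first by a finite subset and then by a finite subset in general position. The argument follows the one given for $\mathbb R^N$ in \cite[Lemma~2]{Kuzminykh-typ}; the extra care needed in $\ell _2$ concerns the absence of local compactness.

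For each tuple $(A_0,\ldots ,A_d)$ let $\phi (A_0,\ldots ,A_d)$ be a continuous non-negative measure of affine dependence, e.g.\ the Gram determinant of the vectors $A_i-A_0$ or the volume of the simplex spanned; thus $\phi =0$ precisely when the points are affinely dependent. For each $d$ (with $d\in \{1,\ldots ,N\}$, or $d\in \mathbb N$ in the $\ell _2$ case) and each $n\in \mathbb N$, let $\mathcal W_{d,n}$ be the set of $K\in \mathcal K$ admitting some tuple $(A_0,\ldots ,A_d)\in K^{d+1}$ with pairwise distances $\geqslant 1/n$ and $\phi (A_0,\ldots ,A_d)=0$. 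A compactum fails to be in general position iff it belongs to some $\mathcal W_{d,n}$, so it suffices to prove that each $\mathcal W_{d,n}$ is closed.

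To see this, suppose $K_m\to K$ in the Hausdorff metric with witnesses $(A_0^m,\ldots ,A_d^m)\in K_m^{d+1}$. I invoke the classical fact that $\bigl(\bigcup _m K_m\bigr)\cup K$ is compact whenever $K_m\to K$ in $\mathcal K(X)$ for a complete metric space $X$; this handles $\ell _2$ just as well as $\mathbb R^N$. Each sequence $(A_i^m)_m$ therefore lies in a common compact set, so a diagonal argument yields a subsequence along which $A_i^m\to A_i\in K$ for every $i$, and continuity of $\phi $ and of pairwise distances delivers the required witness in $K$.

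For density, finite subsets are dense in $\mathcal K(X)$ for any perfect complete metric space $X$. Any $\ell $ points of a given finite set span a finite-dimensional affine subspace, and the conditions expressing affine dependence of $(d+1)$-element subsets reduce to the vanishing of finitely many polynomials in the coordinates of those points; consequently the set of $\ell $-tuples in general position is open and dense both in $(\mathbb R^N)^{\ell }$ and in $(\ell _2)^{\ell }$. An arbitrarily small perturbation therefore places any given finite set in general position, producing a compactum in general position arbitrarily close to the original. The principal obstacle in adapting the argument to $\ell _2$ is precisely the closedness of $\mathcal W_{d,n}$: since bounded sequences in $\ell _2$ need not have convergent subsequences, one must invoke the compactness of the union of a Hausdorff-convergent sequence of compacta rather than local compactness of the ambient space.
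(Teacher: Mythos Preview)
Your proof is correct and follows essentially the same approach as the paper: both write the complement as the countable union of the sets $\mathcal W_{d,n}$ (the paper's $\mathcal H_{d,t}$) and assert these are closed and nowhere dense. You actually supply more detail than the paper, which merely cites \cite{Kuzminykh-typ}; in particular, your use of the compactness of $\bigl(\bigcup_m K_m\bigr)\cup K$ to handle the closedness argument in $\ell_2$ is exactly the point that needs care beyond the Euclidean case.
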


\begin{proof}
Let $X$ be $\mathbb R^N$ (where $N\geqslant 2$) 
or $\ell _2$.
For arbitrary positive integers $d$ and $t$
let $\mathcal H_{d,t}$ be the subset of $\mathcal K(X)$
consisting of all non-empty compacta $K\subset X$
with the property:
there exists an affinely dependent finite subset 
$\{ A_0, A_1, \ldots  , A_d \} \subset K$
such that
 $d\leqslant \dim X$ and
$$\frac{1}{t} \leqslant \min
\{
\rho (A_i , A_j )\ | \ 
i=0,\ldots , d; \ j=0,\ldots , d; \  i\neq j \}
.$$
As in
\cite[proof of Lemma~2]{Kuzminykh-typ}, each $\mathcal H_{d,t}$
is closed and nowhere dense in $\mathcal K(X)$.
A non-empty compactum $M\subset X$
is in general position iff
$M\in \bigcap\limits_{d,t \in \mathbb N} \left(\mathcal K(X)\setminus \mathcal H_{d,t}\right) $.
\end{proof}

But 
to fulfill all conditions
of Definition~\ref{def:GP}, we need additional 
work. 
The main ingredient of our proof
is the 
Mycielski-Kuratowski Theorem.
(Proposition \ref{GP-1} will not be used in our proof,
although it could have been.)

\subsection{Proof of Theorem \ref{mainthm-compact}}

\begin{proof}[Proof of Theorem \ref{mainthm-compact}]
Let $X$ be $\mathbb R^N$ (where $N\geqslant 2$) or $\ell _2$.
Call an array 
of integers 
$(k;s; i_1,\ldots , i_s)$
\emph{admissible} if
 $0< k < \dim X$,
$s\geqslant 1$,
$i_1\geqslant 2$, ...,
$i_s\geqslant 2$, and
$ \sum\limits_{j=1}^s (i_j -1)\geqslant  k+1 $.
For each admissible array,
define the relation
$R_{k;s;i_1,\ldots , i_s}\subset X^{i_1+\ldots +i_s}$
as the set of all 
$$
(A_1^{1},\ldots , A_{i_1}^{1};
A_{1}^{2}, \ldots , A_{i_2}^{2};
\ldots ;
A_{1}^{s}, \ldots , 
A_{i_s}^s ) \in  X^{i_1+\ldots +i_s}
$$
such that the linear span
of the set
\begin{equation}
\label{vectsyst}
\Bigl\{ \overrightarrow{A_1^{j}A_p^{j}}
\ | \ 
 j=1,\ldots ,s, \ \ p=2,\ldots, i_j
 \Bigr\} 
\end{equation}
has dimension $\leqslant k$.
(Note that the set (\ref{vectsyst}) consists of $\sum\limits_{j=1}^s (i_j - 1) \geqslant k+1 $ vectors.)
For an integer $q$ and an ordered $q$-tuple of points
$(A_1,\ldots , A_q)\in X^q$, denote
$$
\mathcal V (A_1, \ldots , A_q):= \{ \overrightarrow{A_1A_2} , 
\overrightarrow{A_1A_3}, \overrightarrow{A_1A_4},
\ldots , \overrightarrow{A_1A_q} \}.
$$
In this notation, the set $(\ref{vectsyst})$
can we written briefly as
$\bigcup \limits_{j=1}^s \mathcal V (A^j_1, A^j_2, \ldots , A^j_{i_j}) $.

For each admissible
array 
$(k;s; i_1,\ldots , i_s)$,
the set 
$R_{k;s;i_1,\ldots , i_s}$ 
is closed and nowhere dense in
$ X^{i_1+\ldots +i_s}$.
Indeed, it is clear that the interior of $R_{k;s;i_1,\ldots , i_s} $ is empty;
and it remains to show that
$ X^{i_1+\ldots +i_s} \setminus R_{k;s;i_1,\ldots , i_s}$ 
is open in
$ X^{i_1+\ldots +i_s}$. For this, just observe that the 
linear subspace spanned by the
system 
$(\ref{vectsyst})$
has dimension $\geqslant k+1$
if and only if 
the set $(\ref{vectsyst})$ contains
a 
subset
$\{ \vec v_1, \ldots ,\vec v_{k+1} \} $ 
with a non-zero Gram determinant
$$
\begin{vmatrix}
(\vec v_1,\vec v_1) & (\vec v_1,\vec v_2) & (\vec v_1,\vec v_3)  & \ldots & (\vec v_1,\vec v_{k+1}) \\
(\vec v_2,\vec v_1) & (\vec v_2,\vec v_2) & (\vec v_2,\vec v_3)  & \ldots & (\vec v_2,\vec v_{k+1}) \\
\vdots & \vdots & \vdots & \ddots & \vdots \\
(\vec v_{k+1},\vec v_1) & (\vec v_{k+1},\vec v_2) & (\vec v_{k+1},\vec  v_3)  & \ldots & (\vec v_{k+1},\vec v_{k+1}) 
\end{vmatrix}
\neq 0.
$$

The Mycielski-Kuratowski Theorem 
\cite[Thm. 1]{Mycielski}, \cite[Cor. 3]{Kuratowski} 
(see \cite[19.1]{Kechris} for a short proof) now implies:
for each admissible array $(k;s; i_1,\ldots , i_s)$
the set
$$
\mathcal M_{k;s;i_1,\ldots , i_s} :=
\{ K \in \mathcal K(X) \ | \ (K)^{i_1+\ldots +i_s} \cap R_{k;s;i_1,\ldots , i_s} = \emptyset \}
$$
is a dense $G_\delta $ subset in $\mathcal K(X)$.
Here 
$
(K)^{q} $ 
denotes the set of all sequences $(A_1,\ldots , A_q) \in K^q$
whose elements are distinct: 
$A_i \neq A_j$ for each $i\neq j$.
Consider
$$
\mathcal P := 
\bigcap 
\mathcal M_{k;s;i_1,\ldots , i_s} 
,
$$
where the 
intersection
is taken over all admissible arrays $(k;s;i_1,\ldots , i_s)$.
The set $\mathcal P$
is a dense $G_\delta $ subset in 
$\mathcal K(X)$.

Claim.
{\sl A non-empty compact set $M\subset X$
belongs to
$\mathcal  P$
if and only if $M$
is in general position with respect to all projections if $X=\mathbb R^N$,
and with respect to all projections with finite-dimensional kernels if $X=\ell _2$.}
Let us prove this.

\underline{Necessity.}
Let $M\in \mathcal  P$.
Take any non-zero proper linear subspace $H$ of $X$
(in addition, assume that $H$ is finite-dimensional if $X=\ell _2$). 
Denote $\kappa := \dim H$.

(i)
Suppose that $p_{H}|_M: M\to H^\bot $
has infinitely many non-degenerate distinct fibers
$F_1, F_2, \ldots $.
Let $\varsigma := \kappa +1 $.
There exist 
integers $\varphi _{1} \geqslant 2$, $\varphi _{2} \geqslant 2$,...,
$\varphi _{\varsigma } \geqslant 2$ and distinct
points
$$
A_1^1, A_2^1 , \ldots , A_{\varphi _{1}  }^1 \in F_1,
$$
$$
A_1^2, A_2^2 , \ldots , A_{\varphi _{2}  }^2 \in F_2,
$$
$$
\ldots 
$$
$$
A_1^{\varsigma }, A_2^{\varsigma } , \ldots , A_{\varphi _{\varsigma }  }^{\varsigma } \in F_{\varsigma} .
$$
Note that
the array
$(\kappa; \varsigma ; \varphi _{1}, \varphi _{2} , \ldots ,
\varphi _{\varsigma })$
is admissible.
By construction, 
$$
(M)^{ \varphi _{1} + \varphi _{2} + \ldots +
\varphi _{\varsigma }}
\subset X^{\varphi _{1} + \varphi _{2} + \ldots +
\varphi _{\varsigma }}
\setminus
R_{\kappa; \varsigma ; \varphi _{1}, \varphi _{2} , \ldots ,
\varphi _{\varsigma }},
$$
hence
$$
\dim \left( \Lin \left(
\bigcup \limits_{j=1}^{\varsigma   }
 \mathcal V (A^j_1, A^j_2, \ldots , A^j_{\varphi _j }) 
\right) \right)
\geqslant \kappa +1 .
$$
On the other hand, for each $j=1,\ldots , \varsigma $
the fiber $F_j$ is contained in the affine subspace 
$A_1^j + H := \{ A_1^j + \vec v \ | \ \vec v\in H\}$,
therefore
$ \mathcal V (A^j_1, A^j_2, \ldots , A^j_{\varphi _j }) 
\subset H$.
We get 
$$
\bigcup \limits_{j=1}^{\varsigma   }
 \mathcal V (A^j_1, A^j_2, \ldots , A^j_{\varphi _j }) 
\subset H ,
$$
and
$$
\dim \left( \Lin \left(
\bigcup \limits_{j=1}^{\varsigma   }
 \mathcal V (A^j_1, A^j_2, \ldots , A^j_{\varphi _j }) 
\right) \right)
\leqslant \dim H = \kappa .
$$
This contradiction shows that
$p_{H}|_M: M\to H^\bot $
has only finitely many non-degenerate fibers.

(ii) Let $F$ be any non-degenerate fiber of $p_{H}|_M: M\to H^\bot $.
Suppose that $|F| \geqslant \kappa +2$;
take distinct points
$A_1,\ldots , A_{\kappa +2}\in F$.
The array $(k;s;i_1) = (\kappa ; 1; \kappa +2)$ is admissible.
Thus $(M)^{\kappa +2}\subset X^{\kappa +2}\setminus R_{\kappa ; 1; \kappa +2}$.
Therefore
$$
\dim \left( \Lin 
(\overrightarrow{A_1A_2},\overrightarrow{A_1A_3}, \overrightarrow{A_1A_4} , \ldots , 
\overrightarrow{A_1A_{\kappa +2}})
\right)
\geqslant \kappa +1.
$$
This leads to 
a contradiction since
$\{
\overrightarrow{A_1A_2},\overrightarrow{A_1A_3}, \overrightarrow{A_1A_4} , \ldots , \overrightarrow{A_1A_{\kappa +2}} \}
\subset
 H
$.
Consequently each non-degenerate fiber of
$p_{H}|_M: M\to H^\bot $
contains $\leqslant \kappa +1$ points.

(iii) Let $F = \{ A_1, \ldots , A_\mu \}$
 be any non-degenerate fiber of $p_{H}|_M: M\to H^\bot $.
Let us show that its points form the vertices of
a $(\mu -1)$-dimensional simplex.
Since $\mu \leqslant \kappa +1$ by (ii),
it suffices to show that 
the system $ \{ A_1, \ldots , A_\mu \}$ is in general position.
Assume the contrary;
then there exists a linear subspace $\tilde H \subset H$
and an index $j _0 \in \{1,\ldots , \mu \}$
such that
$$
|(A_{j _0} + \tilde H) \cap F | \geqslant  \dim \tilde H + 2  .
$$
On the other hand,
the set 
$(A_{j_0} + \tilde H) \cap F$ is a non-degenerate fiber
of the projection $p_{ \tilde H } |_ M : M \to \tilde H^\bot $.
Applying (ii) to $p_{ \tilde H } |_ M $, we get the inequality
$$|(A_{j_0} + \tilde H) \cap F | \leqslant  \dim \tilde H + 1,$$
which is a contradiction.
Therefore 
the system $ \{ A_1, \ldots , A_\mu \}$ is in general position.

(iv) Let $F_1,F_2,\ldots , F_{\varsigma }$ be the list of 
all non-degenerate fibers of 
 $p_{H}|_M: M\to H^\bot $.
They are finite sets by (ii).
Enumerate their points:
 $$
 F_1 = \{ A_1^1, A_2^1, A_3^1,\ldots , A_{\varphi _1}^1 \} ,
 $$
 $$
 F_2 = \{ A_1^2, A_2^2, A_3^2,\ldots , A_{\varphi _2}^2 \} ,
 $$
 $$
 \ldots 
 $$
  $$
 F_{\varsigma } = \{ A_1^{\varsigma }, A_2^{\varsigma }, A_3^{\varsigma },\ldots , A_{\varphi _{\varsigma }}^{\varsigma } \} .
 $$
Our aim is to prove the inequality
 $$
 (\varphi _1 -1) + (\varphi _2 - 1)+ \ldots + (\varphi _{\varsigma } - 1) \leqslant \kappa .
 $$
 Suppose the contrary: let
 $$
 (\varphi _1 -1) + (\varphi _2 - 1)+ \ldots + (\varphi _{\varsigma } - 1) \geqslant \kappa +1.
 $$
In this case, the array $(\kappa ; \varsigma ; \varphi _1, \ldots , \varphi _{\varsigma })$ is admissible. We get
$$(M)^{\varphi _1 + \varphi _2+  \ldots + \varphi _{\varsigma }}
 \subset X^{\varphi _1 + \varphi _2+  \ldots + \varphi _{\varsigma }} \setminus R_{\kappa ; \varsigma ; 
 \varphi _1 , \ldots , \varphi _{\varsigma }},$$
 hence
 $$
 \dim \left( \Lin \left(
\bigcup \limits_{j=1}^{\varsigma   }
 \mathcal V (A^j_1, A^j_2, \ldots , A^j_{\varphi _j }) 
\right) \right)
 \geqslant \kappa +1.
 $$
 This leads to  a contradiction
 since 
$\bigcup\limits_{j=1}^\varsigma  \mathcal V (P^j_1, P^j_2, \ldots , P^j_{\varphi _j }) 
\subset H$, as before.

\underline{Sufficiency.}
Let us prove that 
for $N\geqslant 2$
any non-empty compactum $M\subset \mathbb R^N$ 
(or $M\subset \ell _2$)
which is in general position with respect to all projections
(in case of $\ell _2$, to all projections with finite-dimensional kernels)
lies in $\mathcal P$.
Suppose, towards a contradiction, that
$M\notin \mathcal M_{k;s; i_1,\ldots , i_s}$
for some admissible array $(k;s; i_1,\ldots , i_s)$.
That is,
$(M)^{i_1+\ldots +i_s} \cap R_{k;s;i_1,\ldots , i_s} \neq\emptyset $.
By definition, 
$(M)^{i_1+\ldots +i_s}$ consists
of all ordered
$(i_1+\ldots +i_s)$-tuples of distinct points of $M$.
Then 
$ R_{k;s;i_1,\ldots , i_s} $
contains 
an array
$$
(A_1^{1},\ldots , A_{i_1}^{1};
A_{1}^{2}, \ldots , A_{i_2}^{2};
\ldots ;
A_{1}^{s}, \ldots , 
A_{i_s}^s ) 
$$
consisting
of distinct points of $M$.
We have
$$
\dim \left( \Lin \left(
\bigcup \limits_{j=1}^s \mathcal V (A^j_1, A^j_2, \ldots , A^j_{i_j}) 
\right)
\right) \leqslant k .
$$
Put $H:= \Lin \left(
\bigcup \limits_{j=1}^s \mathcal V (A^j_1, A^j_2, \ldots , A^j_{i_j}) 
\right)$.
Consider 
the fibers of 
 $p_H|_M: M\to H^\bot $.
For each $j=1,\ldots , s$ the set
$\{ A^j_1, A^j_2, \ldots , A^j_{i_j} \}$
is contained in one of the fibers
(these fibers may coincide for different $j$'s).
Consequently 
$$
\sum\limits_{A\in M} 
\left( \frac{\left| F( A) \right| - 1 }{|F(A)|}\right)
\geqslant \sum\limits_{j=1}^s (i_j - 1) \geqslant k+1.
$$
Hence $M$ is not in general position with respect to $p_H$,
a contradiction.
This finishes the proof.
\end{proof}

\end{document}